\newtheorem{theorem}{Theorem}
\newtheorem{definition}{Definition}
\newtheorem{obs}{Comment}
\newtheorem{ex}{Example}
\newtheorem{corol}{Corollary}
\title{Algebraic Vector Space}
\author{Fernando M. Matias}
\address{IFRJ, Campus Niter\'oi, RJ, Brazil}
\email{fernando.matias@ifrj.edu.br.}
\date{\today, v-1.1}
\definecolor{blue}{RGB}{41,5,195}
\begin{document}

\begin{abstract}
	We show that the definition of an algebraic basis for a vector space allows the construction of an isomorphism with the one here called Algebraic Vector Space. Although the concept does not bring anything new, we mention some of the problems that the language established here can inspire.
	\textbf{Keywords}: algebraic vectors, algebraic matrices, algebraic vector spaces, notation, infinite-dimensional vector spaces, infinite matrices, sparse matrices, database, big data, Hamel basis.
\end{abstract}
\maketitle




\begin{center}\smaller

\end{center}
\section{Introduction}
This work defines what we call \textbf {Algebraic Vector Space}. The main purpose of this construction is to serve as the basis for another article \cite{MatAlg}.

 We built the concept of Algebraic Vector Space in Sec.  \ref{sec:Const} and \ref{sec:form}. First in a conceptual way, then with the necessary rigor when dealing with uncountable sets. We show that the use of algebraic bases allows manipulation with a “finite appearance” even in the case of vector spaces with an uncountable basis.

In Sec. \ref{sec:Ex} we show that there is nothing new in the approach used. We show through some examples that, in fact, we only give “old acquaintances” a new look.

In Sec. \ref{sec:Pot} we explain why, even if we do not add something that deserves the name "new", we believe that the making of this work is valid. We do this by showing some of the possible lines of developments that follow naturally from this article.
\section{Inception} \label{sec:Const}
Let $ B = \{\mathbf {u}_\lambda \}_{\lambda \in L} $ be an algebraic basis for the vector space $ E $ over some field $ \mathbb {K} $.
\footnote{The algebraic basis is also known as the Hamel basis. Its existence is guaranteed by the Zorn's lemma \cite{kreyszig} and in it, every vector is uniquely expressed as a finite linear combination of elements of this basis.}
\footnote{For the sake of convenience, associated with the notation developed in this article, there will be a single restriction on the index set: we can't have $\phi \in L $. There is no problem. If any $ L $ has the empty set as an element, just follow a simple procedure. As in ZFC set theory, there is no universal set, there will be some set $ c $ such that $ c \notin L $. We then form the set $ L '= (L /\{\phi \}) \cup \{c\} $ and use $ L' $ as the new index set.}
If $ E $ is finite-dimensional, the index set $ L $ will be finite, but in the most general case, it may even be uncountable.
\footnote{It is implicit in the index set concept that it turns $ B $ into a well-ordered set. The guarantee that it is always possible to index the elements of any non-empty set is given by the Well-ordering theorem \cite{halmos_naive}. In the case of the trivial vector space, $ E = \mathbf {0} $, where we have $ B = \phi $, just make $ L = \phi $. The vacuous truth will ensure that the statements made in this work about sets of indexes become true even in the trivial case.}
By construction, any vector can be written as a finite linear combination of the elements of an algebraic basis. If we consider only linear combinations with nonzero coefficients, we guarantee the uniqueness of the decomposition of nonzero vectors. In other words, whatever $ \mathbf{x}\in E $, $ \mathbf{x} \neq \mathbf{0} $, there will be a single ordered n-tuple of ordered pairs $ ((\alpha_1, \lambda_1), (\alpha_2, \lambda_2), ..., (\alpha_n, \lambda_n)) $, $ \alpha_i \neq 0 $ and $ \lambda_1 <\lambda_2<... <\lambda_n $, such that
\begin{align}\label{eq:base_algebrica}
 \mathbf{x}= \alpha_1\cdot \mathbf{u}_{\lambda_1}+ \alpha_2\cdot \mathbf{u}_{\lambda_2}+...+\alpha_n\cdot \mathbf{u}_{\lambda_n}
\end{align}
We will rename some indexes on equation \ref{eq:base_algebrica}
\begin{align}
\mathbf{x}= \alpha_{\lambda_1}\cdot \mathbf{u}_{\lambda_1}+ \alpha_{\lambda_2}\cdot \mathbf{u}_{\lambda_2}+...+\alpha_{\lambda_n}\cdot \mathbf{u}_{\lambda_n}
\end{align}
and get a single ordered pair
$\mathbf{x}_B=(\{\lambda_1,\lambda_2,...,\lambda_n\},(\alpha_{\lambda_1},\alpha_{\lambda_2},...,\alpha_{\lambda_n})) $ for each $\mathbf{x}\neq \mathbf{0}$. Note that, by construction, if  $ \mathbf{x}\neq\mathbf{0} $ we will have $\mathbf{x}_B\neq (\phi,\phi)=\{\phi\} $. So it is natural to associate the null vector with $\mathbf{0}_B= (\phi,\phi) $.

This informal construction defines a bijection between any vector space and a special set of ordered pairs, which we call $ E_B $. This bijection induces natural operations on  $ E_B $ that transforms the bijection into an isomorphism. In the next section, we will build these operations, but only after completing the formal construction of the $ E_B $ set. The objective is to show that, however general the vector space may be, it will always be possible to construct its algebraic isomorphic. Care is necessary because we even cover cases where the basis for the vector space can be an uncountable set.
\section{Formal Construction of Algebraic Vector Spaces}\label{sec:form}
We will use the notation $ \mathcal{P}_n(L)$ to refer to the set of subsets of $ L$ with $ n $ elements.
\begin{definition}
	Let  $ E $,  vector space over a field  $ \mathbb{K} $, and $ B $, a basis for $ E $ indexed by the $ L $ set. The  algebraic vectors of $ E $ on  $ B $, and denoted  $ E_B $, is defined as
	\begin{align*}
	&E^*_B=\left\{
	\begin{aligned}
	& \bigcup_{n=1}^{N} \mathcal{P}_n(L)\times(\mathbb{K}^*)^n ,\textrm{ if the cardinality of } L \textrm{ is }N\in\mathbb{N}\\
	&\bigcup_{n\in\mathbb{N}^*}\mathcal{P}_n(L)\times(\mathbb{K}^*)^n ,\textrm{ otherwise.}
	\end{aligned}
	\right.\\
	&E_B=E^*_B\cup (\phi,\phi)
	\end{align*}
\end{definition}
If $ B=\phi $, then $ L=\phi $ and $ E= (\phi,\phi)$. This is an expected result because the only vector space with $ B=\phi $ is the trivial one.
\begin{definition}
	Let $ \mathbf{v}\in E_B $. The  index set of $ \mathbf{v}  $ is defined as $ L_  \mathbf{v}=\pi_1(\mathbf{v}) $.
\end{definition}
\begin{definition}
	Let $ \mathbf{v}\in E_B $, $ \mathbf{v}\neq\mathbf{0} $,  and $ \lambda_i\in  L_  \mathbf{v} $.  The $ i^{th}$ component of  $ \mathbf{v}  $ is defined as  $ v^{\lambda_i}=\pi_i\circ\pi_2(\mathbf{v}) $. However, when there is no confusion, we will refer to $v^{\lambda_i}$ as $ v^i $.
\end{definition}
It is important to emphasize the restriction of this definition to non-null vectors. This makes $v^i\neq 0 $ for all $ \mathbf{v}\in E_B $ and $ i\in L$. This is natural in this construction because the null vector has no components. If we wanted to force the notation syntax, including the null vector in the component concept, we would only get $ 0^\phi=\phi $.
\begin{ex}
	If $ \mathbf{v}=((\alpha_{\lambda_1},\alpha_{\lambda_2},...,\alpha_{\lambda_n}),\{\lambda_1,\lambda_2,...,\lambda_n\}) $, then $L_  \mathbf{v}= \{\lambda_1,\lambda_2,...,\lambda_n\}$ and $ v^i=\alpha_{\lambda_i} $. Peculiarly $L_  \mathbf{0}= \phi$. So, we can write $ \mathbf{v}=((\alpha_{\lambda_1},\alpha_{\lambda_2},...,\alpha_{\lambda_n}),L_\mathbf{v}) $.
\end{ex}
\begin{definition}\label{def:MultPorEscalar}
	Let $ \kappa\in \mathbb{K} $ and $ \mathbf{v}=(L_\mathbf{v},(\alpha_{\lambda_1},\alpha_{\lambda_2},...,\alpha_{\lambda_n}))\in E_B $. So, the operation $ \boldsymbol{\cdot}:\mathbb{K}\times E_B\to E_B $ is defined as
	\begin{align}
\kappa\cdot\mathbf{v}=\left\{
\begin{aligned}
&(L_\mathbf{v},(\kappa\alpha_{\lambda_1},\kappa\alpha_{\lambda_2},...,\kappa\alpha_{\lambda_n})),\textrm{ if } \kappa\neq 0 \textrm{ and } \mathbf{v}\neq \mathbf{0}\\
&\mathbf{0},\textrm{ if } \kappa=0 \textrm{ or } \mathbf{v}= \mathbf{0}
\end{aligned}
\right.
\end{align}	
\end{definition}
\begin{corol}
	$ 1\cdot\mathbf{v}=\mathbf{v} $
\end{corol}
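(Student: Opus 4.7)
The plan is to unfold Definition \ref{def:MultPorEscalar} with $\kappa = 1$ and handle the two branches of its piecewise definition separately. There is really only a case split to perform: the work lies in verifying that each branch gives back $\mathbf{v}$ on the nose, which reduces to a coordinate-wise application of the field axiom $1\cdot\alpha=\alpha$.

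First I would dispose of the trivial branch. If $\mathbf{v}=\mathbf{0}$, then the second clause of Definition \ref{def:MultPorEscalar} fires directly and yields $1\cdot\mathbf{0}=\mathbf{0}=\mathbf{v}$, with no arithmetic needed. For the nontrivial branch, I would note that in any field $\mathbb{K}$ we have $1\neq 0$, so when $\mathbf{v}\neq\mathbf{0}$ the pair $(\kappa,\mathbf{v})=(1,\mathbf{v})$ lands in the first clause of the definition. Writing $\mathbf{v}=(L_\mathbf{v},(\alpha_{\lambda_1},\ldots,\alpha_{\lambda_n}))$, the definition produces
\begin{align*}
1\cdot\mathbf{v}=(L_\mathbf{v},(1\cdot\alpha_{\lambda_1},\ldots,1\cdot\alpha_{\lambda_n})).
\end{align*}
Applying the field axiom $1\cdot\alpha_{\lambda_i}=\alpha_{\lambda_i}$ in each coordinate collapses the right-hand tuple to $(\alpha_{\lambda_1},\ldots,\alpha_{\lambda_n})$, so $1\cdot\mathbf{v}=\mathbf{v}$.

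The only thing one might call an obstacle is a bookkeeping check that the two clauses of the definition do not overlap destructively at $\kappa=1$, i.e.\ that we have correctly identified which clause applies. Because $1\neq 0$ in $\mathbb{K}$ is a standard field axiom, the case split on $\mathbf{v}$ alone is exhaustive and unambiguous, so the corollary follows immediately. No appeal to the structure of $L_\mathbf{v}$ or the cardinality distinction made in the definition of $E_B$ is necessary, since the index set is carried through unchanged in the first clause and the null vector is its own image in the second.
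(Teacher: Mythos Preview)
Your argument is correct and is exactly the intended one: the paper states this corollary without proof, treating it as immediate from Definition~\ref{def:MultPorEscalar}, and your case split on $\mathbf{v}=\mathbf{0}$ versus $\mathbf{v}\neq\mathbf{0}$ together with the field identity $1\cdot\alpha=\alpha$ is precisely the unpacking that justifies it.
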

\begin{obs}
	As usual, we will write $\kappa\mathbf{v}=\kappa\cdot\mathbf{v}  $ and $ -1\cdot\mathbf{v}=-\mathbf{v} $.
\end{obs}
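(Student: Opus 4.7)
The statement labelled \textbf{Comment} is a notational convention, not a mathematical claim that needs verification. The plan is therefore to confirm that each shorthand is well-defined from what has already been established, and to flag explicitly why no further argument is required.

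For the first convention, $\kappa\mathbf{v}=\kappa\cdot\mathbf{v}$, Definition \ref{def:MultPorEscalar} already introduces $\boldsymbol{\cdot}:\mathbb{K}\times E_B\to E_B$ as a total function; suppressing the dot is a standard typographical abbreviation of juxtaposition. Because $\kappa\in\mathbb{K}$ and $\mathbf{v}\in E_B$ belong to disjoint syntactic categories in this text, no parsing ambiguity arises when the two symbols are written side by side, and so nothing needs to be checked beyond recording the abbreviation.

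For the second convention, $-1\cdot\mathbf{v}=-\mathbf{v}$, the field $\mathbb{K}$ contains a distinguished element $-1$ (the additive inverse of the multiplicative identity), so $(-1)\cdot\mathbf{v}$ is unambiguously the image of $(-1,\mathbf{v})$ under the scalar multiplication defined in Definition \ref{def:MultPorEscalar}. The symbol $-\mathbf{v}$ is being \emph{christened} here as a bare name for that image; it is not yet being asserted to equal an additive inverse, because addition on $E_B$ has not been defined in the excerpt. Any eventual relation of the form $\mathbf{v}+(-\mathbf{v})=\mathbf{0}$ would be the content of a separate proposition, once the sum operation is in place.

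The main obstacle, then, is not mathematical but expository: the temptation to prove a scalar-multiplication identity should be resisted, since the Comment does not assert one. The entire contribution of this paragraph is to fix two pieces of shorthand, and the \emph{proof}, if one insists on writing one, reduces to two citations of Definition \ref{def:MultPorEscalar}, one for each abbreviation.
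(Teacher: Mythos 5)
Your reading is correct and matches the paper exactly: the Comment carries no proof in the source because it is purely a notational convention, with both abbreviations grounded in Definition \ref{def:MultPorEscalar}, and your observation that $-\mathbf{v}$ is merely a name at this point (addition via Definition \ref{def:SomaVetorial} comes later, and $\mathbf{v}+(-\mathbf{v})=\mathbf{0}$ is only asserted afterward in the group theorem) is consistent with the paper's ordering. Nothing further is needed.
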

\begin{corol}\label{cor:{Compat}}
	Let $ \alpha,\beta \in \mathbb{K} $, and $ \mathbf{v}\in E_B $. Because the product on $ \mathbb{K} $ is associative, then $ \alpha(\beta\mathbf{v})=(\alpha\beta)\mathbf{v}$.
\end{corol}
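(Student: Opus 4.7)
The plan is to prove the identity by case analysis, directly unfolding Definition~\ref{def:MultPorEscalar}. Scalar multiplication is piecewise (two clauses, separated by whether either factor is zero), so associativity needs to be checked in each branch; fortunately, in every branch one side of the definition trivializes and the work reduces to a routine check.

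First I would dispose of the degenerate sub-cases. If $\mathbf{v}=\mathbf{0}$, then $\beta\mathbf{v}=\mathbf{0}$ by the second clause, so $\alpha(\beta\mathbf{v})=\mathbf{0}$; and $(\alpha\beta)\mathbf{v}=\mathbf{0}$ as well. If $\mathbf{v}\neq\mathbf{0}$ but $\beta=0$, then $\beta\mathbf{v}=\mathbf{0}$, so the left side is $\mathbf{0}$; the right side is $(\alpha\cdot 0)\mathbf{v}=0\cdot\mathbf{v}=\mathbf{0}$. If $\mathbf{v}\neq\mathbf{0}$, $\beta\neq 0$, but $\alpha=0$, then $\beta\mathbf{v}\neq\mathbf{0}$ (the first clause outputs a pair whose component list consists of nonzero scalars), so the left side is $0\cdot(\beta\mathbf{v})=\mathbf{0}$, and the right side is $(0\cdot\beta)\mathbf{v}=0\cdot\mathbf{v}=\mathbf{0}$. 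All three degenerate branches therefore match.

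Second, I would handle the generic case $\alpha\neq 0$, $\beta\neq 0$, $\mathbf{v}=(L_\mathbf{v},(\alpha_{\lambda_1},\dots,\alpha_{\lambda_n}))\neq\mathbf{0}$. Since $\mathbb{K}$ is a field it has no zero divisors, so $\alpha\beta\neq 0$, which means the first clause of Definition~\ref{def:MultPorEscalar} applies on both sides. Applying it twice on the left gives
\begin{align*}
\alpha(\beta\mathbf{v}) &= \alpha\cdot\bigl(L_\mathbf{v},(\beta\alpha_{\lambda_1},\dots,\beta\alpha_{\lambda_n})\bigr) \\
 &= \bigl(L_\mathbf{v},(\alpha(\beta\alpha_{\lambda_1}),\dots,\alpha(\beta\alpha_{\lambda_n}))\bigr),
\end{align*}
while a single application on the right gives $(\alpha\beta)\mathbf{v}=\bigl(L_\mathbf{v},((\alpha\beta)\alpha_{\lambda_1},\dots,(\alpha\beta)\alpha_{\lambda_n})\bigr)$. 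The first projections coincide (scalar multiplication never alters $L_\mathbf{v}$), and associativity of multiplication in $\mathbb{K}$ identifies the second projections entry by entry.

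The main obstacle, such as it is, is purely bookkeeping: remembering that the first clause requires \emph{both} factors to be nonzero, which is why the absence of zero divisors in $\mathbb{K}$ must be invoked to put the right-hand side into the same clause as the left. Beyond that, the statement reduces to associativity in $\mathbb{K}$, exactly as indicated in the corollary.
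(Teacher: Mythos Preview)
Your proof is correct and follows the same idea the paper indicates: the identity reduces to associativity of multiplication in $\mathbb{K}$. The paper actually gives no proof beyond the phrase ``because the product on $\mathbb{K}$ is associative'' embedded in the statement itself, so your explicit case analysis of the degenerate branches (and the observation that $\mathbb{K}$ has no zero divisors) is simply a more careful unpacking of what the paper leaves implicit.
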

\begin{definition}
	Let $ \mathbf{v},\mathbf{w}\in E_B $. We will denote $ 0_{\mathbf{v}\mathbf{w}}=\{i \in L_{\mathbf{v}}\cap L_{\mathbf{w}}|v^i+w^i=0\} $.
\end{definition}
\begin{definition} \label{def:SomaVetorial}
	Let $\mathbf{v}=(L_\mathbf{v},(\alpha_{\nu_1},\alpha_{\nu_2},...,\alpha_{\nu_n})) $  and $\mathbf{w}=(L_\mathbf{w},(\beta_{\mu_1},\beta_{\mu_2},...,\beta_{\mu_m})) $. So, the operation  $ + : 	E_B \times 	E_B\to	E_B$ is defined as $\mathbf{v}+\mathbf{w}=(L_{\mathbf{v}+\mathbf{w}},(\gamma_{\rho_1},\gamma_{\rho_2},...,\gamma_{\rho_p}))$, and
	\begin{align}
		 &L_{\mathbf{v}+\mathbf{w}}=(L_{\mathbf{v}}\cup L_{\mathbf{w}})/0_{\mathbf{v}\mathbf{w}}\\
		&\gamma_{\rho_i}=(v+w)^i=\left\{
		\begin{aligned}
		&\alpha_i,\textrm{ if } i\in L_{\mathbf{v}}/L_{\mathbf{w}}\\
		&\alpha_i+\beta_i,\textrm{ if } i\in (L_{\mathbf{v}}\cap L_{\mathbf{w}})/0_{\mathbf{v}\mathbf{w}}\\
		&\beta_i ,\textrm{ if } i\in L_{\mathbf{w}}/L_{\mathbf{v}}
		\end{aligned}
		\right.
	\end{align}
\end{definition}
Remarkably, the set $ 0_{\mathbf{v}\mathbf{w}} $ allows the immediate exclusion of all null components in the vector sum process. This is important because, by construction, the elements of $ E_B $ have no null components. In addition, if $ L_{\mathbf{v}+\mathbf{w}}=\phi  $ we cannot form any $ \gamma_{\rho_i} $, then $\mathbf{v}+\mathbf{w}=(\phi,\phi)=\mathbf{0} $.
By abuse of notation, and regarding definition \ref{def:SomaVetorial}, we will write
\begin{align}
(v+w)^i=v^i+w^i
\end{align}
\begin{theorem}
	The operation in definition \ref{def:SomaVetorial} turns $ (E_B,+) $ on a commutative group.
\end{theorem}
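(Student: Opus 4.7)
I would verify the five axioms of a commutative group one by one. Closure is already packaged into Definition \ref{def:SomaVetorial}: the support $L_{\mathbf{v}+\mathbf{w}} \subseteq L_{\mathbf{v}} \cup L_{\mathbf{w}}$ is a finite subset of $L$, and by the exclusion of $0_{\mathbf{v}\mathbf{w}}$ every coefficient $\gamma_{\rho_i}$ is nonzero, so the output lies in $E_B^{*}$ — or collapses to $(\phi,\phi) = \mathbf{0}$ when everything cancels. Commutativity is immediate because $L_{\mathbf{v}+\mathbf{w}}$ and each $\gamma_{\rho_i}$ are defined through the symmetric operations $\cup$, $\cap$, $/$, and scalar addition in $\mathbb{K}$.

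For the identity, take $\mathbf{0} = (\phi,\phi)$: then $L_{\mathbf{0}} \cap L_{\mathbf{v}} = \phi$, hence $0_{\mathbf{0}\mathbf{v}} = \phi$, $L_{\mathbf{0}+\mathbf{v}} = L_{\mathbf{v}}$, and $(0+v)^i = v^i$ on every index, so $\mathbf{0} + \mathbf{v} = \mathbf{v}$. For inverses, set $-\mathbf{v} = (-1)\cdot\mathbf{v}$ from Definition \ref{def:MultPorEscalar}; then $L_{-\mathbf{v}} = L_{\mathbf{v}}$ with $(-v)^i = -v^i$, so $L_{\mathbf{v}} \cap L_{-\mathbf{v}} = L_{\mathbf{v}}$ and every pairwise sum vanishes. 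Therefore $0_{\mathbf{v},-\mathbf{v}} = L_{\mathbf{v}}$, $L_{\mathbf{v}+(-\mathbf{v})} = \phi$, and $\mathbf{v} + (-\mathbf{v}) = \mathbf{0}$.

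The only genuinely delicate axiom is associativity. My plan is to fix $\mathbf{u},\mathbf{v},\mathbf{w} \in E_B$ and partition $L_{\mathbf{u}} \cup L_{\mathbf{v}} \cup L_{\mathbf{w}}$ into the seven disjoint Venn regions determined by membership in each support. On each region one computes the $i$-th component of both $(\mathbf{u}+\mathbf{v})+\mathbf{w}$ and $\mathbf{u}+(\mathbf{v}+\mathbf{w})$; adopting the convention $u^i = 0$ (resp. $v^i$, $w^i$) whenever $i$ lies outside the relevant support, both sides reduce to the field sum $u^i + v^i + w^i$, and the nonzero entries agree by associativity in $\mathbb{K}$. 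The main obstacle is that Definition \ref{def:SomaVetorial} also deletes indices whose accumulated coefficient cancels, and those deletions must coincide on the two bracketings. I would handle this through the intermediate claim that $i$ belongs to the final support iff $u^i + v^i + w^i \neq 0$; since this condition is manifestly symmetric in $\mathbf{u}, \mathbf{v}, \mathbf{w}$, the two supports necessarily match.

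A cleaner variant, which I would actually prefer for a polished writeup, leans on the informal bijection $\Phi: E \to E_B$ sketched in Section \ref{sec:Const}, which sends each nonzero $\mathbf{x}\in E$ to its unique basis representation $\mathbf{x}_B$. Definition \ref{def:SomaVetorial} is tailored precisely so that $\Phi(\mathbf{x}+\mathbf{y}) = \Phi(\mathbf{x}) + \Phi(\mathbf{y})$: the union of supports mimics joining two basis expansions, the coefficient addition on the intersection mimics collecting like terms, and the removal of $0_{\mathbf{v}\mathbf{w}}$ reproduces the reduction to canonical form. Because $(E,+)$ is already an abelian group, transport of structure along $\Phi$ delivers associativity, commutativity, identity, and inverses on $(E_B,+)$ simultaneously, side-stepping the case analysis above.
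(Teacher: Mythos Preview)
Your direct verification is precisely the paper's approach, only far more thorough: the paper's proof is a three-sentence sketch that invokes ``respective properties on the field'' for commutativity and associativity, names $(\phi,\phi)$ as the identity, and points to $-\mathbf{v}$ for inverses, without any of the careful casework you supply on supports and the cancellation set $0_{\mathbf{v}\mathbf{w}}$.

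Your alternative via transport of structure along $\Phi$ is a genuinely different route. It is correct and arguably cleaner, but note that in the paper's logical order this theorem precedes the isomorphism (Theorem~3); adopting your variant would effectively invert that dependency, establishing the additive half of the isomorphism first and then reading the group axioms off of $(E,+)$. That is a legitimate reorganization and it buys you associativity without the seven-region Venn bookkeeping, at the cost of front-loading the verification that Definition~\ref{def:SomaVetorial} really does satisfy $\Phi(\mathbf{x}+\mathbf{y}) = \Phi(\mathbf{x})+\Phi(\mathbf{y})$ --- which is itself essentially the same index-by-index case split you were trying to avoid.
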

\begin{proof}
	Follows from respective properties on the field that the operation is commutative and associative. It also follows that $\mathbf{0}=(\phi,\phi) $ is a sum's neutral element. Moreover, $\mathbf{v}+(-\mathbf{v})=\mathbf{0}$, so any $ E_B $ element has his symmetric one.
\end{proof}
\begin{theorem}
	The operations $ \cdot $ and $ + $, on definitions \ref{def:MultPorEscalar} and \ref{def:SomaVetorial} turns $ (E_B,\cdot,+) $, on a vector space.
\end{theorem}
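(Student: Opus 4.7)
Since the previous theorem has already endowed $(E_B,+)$ with the structure of an abelian group, and the two corollaries between Definitions \ref{def:MultPorEscalar} and \ref{def:SomaVetorial} provide the unit axiom $1\cdot\mathbf{v}=\mathbf{v}$ and the compatibility $\alpha(\beta\mathbf{v})=(\alpha\beta)\mathbf{v}$, the only axioms still to be checked are the two distributive laws
\[
\alpha(\mathbf{v}+\mathbf{w})=\alpha\mathbf{v}+\alpha\mathbf{w}\qquad\text{and}\qquad(\alpha+\beta)\mathbf{v}=\alpha\mathbf{v}+\beta\mathbf{v}.
\]
Because each nonzero element of $E_B$ is an ordered pair consisting of an index set and a tuple of nonzero components, the plan is to verify each equality in two stages: first show that the two index sets match, then show that the components agree at each shared index. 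The field axioms for $\mathbb{K}$ will do the real work once this bookkeeping is in place; the degenerate cases $\alpha=0$, $\mathbf{v}=\mathbf{0}$, or $\mathbf{w}=\mathbf{0}$ are dispatched directly from Definitions \ref{def:MultPorEscalar} and \ref{def:SomaVetorial}.

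For the first identity, I would reduce to $\alpha\neq 0$ and $\mathbf{v},\mathbf{w}\neq\mathbf{0}$. Since scalar multiplication by a nonzero element leaves the underlying index set untouched, $L_{\alpha\mathbf{v}}=L_\mathbf{v}$ and $L_{\alpha\mathbf{w}}=L_\mathbf{w}$, so both sides share the same ``support candidate'' $L_\mathbf{v}\cup L_\mathbf{w}$. The key observation is that in a field $\alpha v^i+\alpha w^i=0\iff v^i+w^i=0$, hence $0_{\alpha\mathbf{v},\alpha\mathbf{w}}=0_{\mathbf{v}\mathbf{w}}$ and the index sets prescribed by Definition \ref{def:SomaVetorial} coincide. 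Componentwise equality then follows from a single appeal to distributivity in $\mathbb{K}$.

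The second identity is the genuinely delicate one, and I expect it to be the main obstacle, because the cancellation set $0_{\alpha\mathbf{v},\beta\mathbf{v}}$ can absorb components even when $\alpha$, $\beta$, and $\mathbf{v}$ are all individually nonzero. The plan is to split on whether each of $\alpha$, $\beta$, and $\alpha+\beta$ vanishes. If $\alpha+\beta=0$, the left side is $\mathbf{0}$, and the right side is also $\mathbf{0}$ because either both terms already vanish or else $\beta=-\alpha\neq 0$ forces every index of $L_\mathbf{v}$ into $0_{\alpha\mathbf{v},\beta\mathbf{v}}$. If exactly one of $\alpha,\beta$ is zero, the corresponding term disappears and both sides reduce to a common scalar multiple. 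In the remaining case where $\alpha,\beta,\alpha+\beta$ are all nonzero, $(\alpha+\beta)v^i=\alpha v^i+\beta v^i$ is nonzero for every $i\in L_\mathbf{v}$, so $0_{\alpha\mathbf{v},\beta\mathbf{v}}=\phi$, both sides share the index set $L_\mathbf{v}$, and componentwise agreement is again a one-line consequence of distributivity in $\mathbb{K}$. The real challenge throughout is not algebraic depth but the discipline of tracking how the definitions silently suppress zero components, so that no case is overlooked.
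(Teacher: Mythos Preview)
Your proposal is correct and follows essentially the same route as the paper: invoke the previous theorem for the abelian-group structure, cite the corollaries for the unit law and scalar compatibility, and then derive the two distributive laws from distributivity in $\mathbb{K}$. The paper dispatches the distributive laws in a single sentence, whereas you supply the honest case analysis (tracking how nonzero scalars preserve index sets and how the cancellation sets $0_{\mathbf{v}\mathbf{w}}$ behave), which is exactly the bookkeeping the paper's terse proof leaves implicit.
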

\begin{proof}
	By the previous theorem, we know that $ E_B $ with the  $ + $ operation is commutative. The \ref{cor:{Compat}} corollary exposes the scalar compatibility. Because the field's distributivity with the \ref{def:MultPorEscalar} and \ref{def:SomaVetorial} definitions, for all  $ \alpha,\beta\in\mathbb{K} $ and $ \mathbf{v},\mathbf{w}\in E_B $ we have $ (\alpha +\beta)\mathbf{u}=\alpha\mathbf{u}+\beta\mathbf{u} $ and $ \alpha(\mathbf{u}+\mathbf{w})= \alpha\mathbf{u}+\alpha\mathbf{w}$. So  $ (E_B,+) $ satisfies the vector space axioms.
\end{proof}
\begin{theorem}
$ E $ and $ E_B $ are isomorphic vector spaces.
\end{theorem}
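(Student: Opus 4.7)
The plan is to exhibit an explicit isomorphism $\Phi: E \to E_B$, directly reading off the coefficients of the unique finite expansion guaranteed by the algebraic basis. Concretely, for $\mathbf{x} = \mathbf{0}$ I would set $\Phi(\mathbf{0}) = (\phi,\phi)$, and for $\mathbf{x} \neq \mathbf{0}$ with the unique decomposition from equation \ref{eq:base_algebrica}, namely $\mathbf{x} = \alpha_{\lambda_1}\mathbf{u}_{\lambda_1} + \cdots + \alpha_{\lambda_n}\mathbf{u}_{\lambda_n}$ with $\alpha_{\lambda_i}\neq 0$ and $\lambda_1<\cdots<\lambda_n$, I would declare
\[
\Phi(\mathbf{x}) = (\{\lambda_1,\ldots,\lambda_n\},(\alpha_{\lambda_1},\ldots,\alpha_{\lambda_n})).
\]
Well-definedness is immediate from the uniqueness of the finite expansion in an algebraic (Hamel) basis.

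Next I would show $\Phi$ is a bijection. Injectivity follows again from the uniqueness of the decomposition: if $\Phi(\mathbf{x}) = \Phi(\mathbf{y})$, both vectors have the same index set and the same coefficients, so they equal the same formal finite $\mathbb{K}$-linear combination in $E$. Surjectivity is constructive: any element $(\{\lambda_1,\ldots,\lambda_n\},(\alpha_1,\ldots,\alpha_n))\in E^*_B$ arises as the image of the finite sum $\sum_i \alpha_i \mathbf{u}_{\lambda_i}\in E$ (a genuine vector, since linear combinations are finite), and $(\phi,\phi)$ is the image of $\mathbf{0}$. The index sets being subsets of $L$ with distinct elements, and coefficients being in $\mathbb{K}^*$, match perfectly the constraints imposed when defining $E^*_B$.

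Then I would verify compatibility with the operations. For scalar multiplication, if $\kappa=0$ or $\mathbf{x}=\mathbf{0}$ both sides collapse to $\mathbf{0}$; otherwise $\kappa\mathbf{x} = \sum_i (\kappa\alpha_{\lambda_i})\mathbf{u}_{\lambda_i}$ with all $\kappa\alpha_{\lambda_i}\neq 0$, which matches Definition \ref{def:MultPorEscalar} term by term. For addition, given $\mathbf{x}, \mathbf{y}\in E$ with expansions supported on $L_\mathbf{x}$ and $L_\mathbf{y}$, the sum $\mathbf{x}+\mathbf{y}$ in $E$ collects coefficients index by index: on $L_\mathbf{x}\setminus L_\mathbf{y}$ it is $\alpha_i$, on $L_\mathbf{y}\setminus L_\mathbf{x}$ it is $\beta_i$, and on $L_\mathbf{x}\cap L_\mathbf{y}$ it is $\alpha_i+\beta_i$. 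The unique canonical form of $\mathbf{x}+\mathbf{y}$ then requires discarding indices where $\alpha_i+\beta_i=0$, which is precisely the set $0_{\mathbf{x}\mathbf{y}}$ removed in Definition \ref{def:SomaVetorial}.

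The main obstacle I anticipate is the bookkeeping in the additive case: ensuring that the "canonical form" one extracts from $\mathbf{x}+\mathbf{y}$ in $E$ really coincides with the output of Definition \ref{def:SomaVetorial}, especially when cancellations empty the support entirely (in which case both sides must yield $\mathbf{0}=(\phi,\phi)$, and one must invoke the convention adopted in the paper for the empty tuple). Everything else is routine, but this matching of supports and the treatment of the zero vector as a degenerate case is where care is required.
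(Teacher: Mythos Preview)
Your proposal is correct and follows essentially the same approach as the paper: define the map via the unique finite expansion in the Hamel basis, send $\mathbf{0}$ to $(\phi,\phi)$, argue bijectivity from uniqueness of the decomposition, and then check compatibility with the two operations. In fact your treatment is more detailed than the paper's, which simply asserts that sums and scalar multiples are preserved without working through the cancellation bookkeeping you flag.
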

\begin{proof}
	The choice of $ B $ as one algebraic basis for $ E $ allows writing any $ \mathbf{v}\in E $, $  \mathbf{v}\neq\mathbf{0} $ in a unique finite linear combination of elements from its basis. Ordering addiction by the index set $ L $, each sum will be associated with a unique element $ \mathbf{v}_B\in E_B $, $  \mathbf{v}_B\neq(\phi,\phi) $. Similarly, for each $ \mathbf{v}_B\in E_B $, $  \mathbf{v}_B\neq(\phi,\phi) $ we will build a unique finite linear combination with the basis elements. Because of basis definition, we will catch a unique element $ \mathbf{v}\in E $. So we have a one-to-one map between not nulls elements in the vector spaces $ E $ and $ E_B. $. If $ \mathbf{v}=\mathbf{0} $  we take $ \mathbf{0}_B=(\phi,\phi) $, completing the one-to-one map between $ E $ and $ E_B$.
	
	We show the bijection. Also, still using the decomposition in the ordered basis as a round-trip passage between $ E $ and $ E_B $, we see that the operations of the vector sum and scalar multiplication are preserved.
	
	So $ E $ and $ E_B $ are isomorphic vector spaces.
\end{proof}
\section{The Algebraic Vector Spaces Standard Basis}
The choice of a basis for a vector space defines its isomorphic algebraic. So, also induces over it a natural basis. Let $\mathbf{x}=(\{\lambda_1,\lambda_2,...,\lambda_n\},(\alpha_{\lambda_1},\alpha_{\lambda_2},...,\alpha_{\lambda_n})) $. Taking the definitions \ref{def:MultPorEscalar} and \ref{def:SomaVetorial} into account we see that
\begin{align*}
\mathbf{x}=\alpha_{\lambda_1}(\{\lambda_1\},1)+\alpha_{\lambda_2}(\{\lambda_2\},1)+...\alpha_{\lambda_n}(\{\lambda_n\},1)
\end{align*}
It's easy to see than the set $ \{(\{\lambda\},1)\}_{\lambda \in L} $  is the standard basis for algebraic vector spaces. We will use the usual notation, and write $ \mathbf{e}_\lambda=(\{\lambda\},1)$. So, as usual we can write
\begin{align*}
\mathbf{x}=\alpha_{\lambda_1} \mathbf{e}_{\lambda_1}+\alpha_{\lambda_2}\mathbf{e}_{\lambda_2}+...\alpha_{\lambda_n}\mathbf{e}_{\lambda_n}
\end{align*}
Putting this in a better form we obtain
\begin{align}\label{eq:DefSomaEinstein}
\mathbf{x}=\sum_{i\in L_\mathbf{x}}\alpha_i\mathbf{e}_i
\end{align}
In the case of $  L_\mathbf{x}=\phi $ we will make $ \sum_{i\in L_\mathbf{x}}\alpha_i\mathbf{e}_i=\mathbf{0} $. With all this we can make the bellow definition.
\begin{definition}[Extended Einstein's Notation]
An index variable that appears repeated in upper and lower positions, and is not otherwise defined, should be summed assuming each one of its valid values. The term "valid" meaning: that they're present at their intersection of the indexes sets involved. The result of that sum will be the suitable zero if this intersection is the empty set.
\end{definition}
Using this definition we can write
\begin{align*}
\mathbf{x}=\alpha^i\mathbf{e}_i
\end{align*}
The sum above is over the valid values for the index $ i $ in the sets $ L_\mathbf{x} $ and $ L $, whose intersection is just $ L_\mathbf{x} $. In the case of  $ L_\mathbf{x}=\mathbf{0} $, then $ L_\mathbf{x}=\phi $and we will obtain $\alpha^i\mathbf{e}_i=\mathbf{0}$. At this moment the only convenient zero is just the $ \mathbf{0} $ but there will be other possibilities in future works.
Finally, we will write this in a even more convenient form:
\begin{align}\label{eq:SomaEinstein}
\mathbf{x}=x^i\mathbf{e}_i
\end{align}
Usually the Einstein's notation is defined only in finite-dimensional metric spaces. At this moment we are dealing even with uncountable infinite-dimensional vector spaces without metric. There are many problems with careless uses of this notation, mainly associated to results base dependents. So, this also could occurs with the definition above. However, until now, this is not the case because the reasons below.
\begin{enumerate}
	\item We made only one restriction to the basis: it is algebraic. In the finite-dimensional cases all basis are algebraic, so, no really restriction are in fact made. Only in the infinite-dimensional case we have a real restriction, necessary to ensure well-defined sums. The consequences of this restriction will be discussed in a future work. For a while, we will assume this as a limitation.
	\item The main problems with this notation comes up when we use it to made “contractions”. In this cases is necessary to associate the position of the indexes with dual spaces, normally using a metric. For now, it's not a problem. We used this convention only to put upper indexes to components and lower to the basis elements. There is no contraction, so, there is no problem.
\end{enumerate}
The issues above will be re-analyzed in future work. For now, we will use unambiguous equation  \ref{eq:DefSomaEinstein} to define the notation that appears in \ref{eq:SomaEinstein}. In these terms, we are only using a still more compact notation.

\section{Demystifying Algebraic Vectors}\label{sec:Ex}

In the previous section, we showed that the definition of an algebraic basis over any vector space allows the formal construction of an isomorphism between vector spaces with the one here called “Space of Algebraic Vectors”. This construction, although excessively detailed due to the need for rigor, deals in fact with ridiculously simple concepts. This can be seen in some examples.
\begin{ex}[$ \mathbb{R}^3 $ Basis]
  $ \mathbb{R}^3 $ $ \{\mathbf{e}_1,\mathbf{e}_2,\mathbf{e}_3\} $ the canonical ordered basis of $ \mathbb{R}^3 $. 
	\begin{itemize}
		\item The usual basis for $ \mathbb{R}^3 $ is the set $ \{\mathbf{e}_1,\mathbf{e}_2,\mathbf{e}_3\} $.
		\item In the  algebraic vectors notation we have $ \{(\{1\},1),(\{2\},1),(\{3\},1)\} $.
		\item In the usual ordered tuple notation we have  $\{(1,0,0),(0,1,0),(0,0,1)\}$.
	\end{itemize}
\end{ex}
Until now, to choose is only matters of taste. So, let's examine other situations.
\begin{ex}[$ \mathbb{R}^3 $ Vectors]
Let $ \{\mathbf{e}_1,\mathbf{e}_2,\mathbf{e}_3\} $ the canonical ordered basis of $ \mathbb{R}^3 $. 
\begin{itemize}
	\item Let $ \mathbf{v}=2\mathbf{e}_1+\mathbf{e}_2+5\mathbf{e}_3$ and $  \mathbf{w}=4\mathbf{e}_1+\mathbf{e}_3$. So $ \mathbf{v}-5\mathbf{w}=-18\mathbf{e}_1+\mathbf{e}_2$.
	\item In the  algebraic vectors notation we have  $ \mathbf{v}=(\{1,2,3\},(2,1,5))$ e $  \mathbf{w}=(\{1,3\},(4,1))$. So $ \mathbf{v}-5\mathbf{w}=(\{1,2\},(-18,1))$.
	\item In the usual ordered tuple notation we have  $ \mathbf{v}=(2,1,5)$, $  \mathbf{w}=(4,0,1)$. So $ \mathbf{v}-5\mathbf{w}=(-18,1,0)$
\end{itemize}
\end{ex}
In this example, we note that algebraic vector notation is just a different, and less intuitive, way of writing something identical to those done in the usual approach. Furthermore, that the usual way via ordered tuples is much simpler. We will see that the following example changes the scenario a little.
\begin{ex}[$ \mathbb{R}^{100} $ Vectors]\label{ex:R100}
	Let $ \{\mathbf{e}_1,\mathbf{e}_2,...,\mathbf{e}_{100}\} $ the canonical ordered basis of $ \mathbb{R}^{100} $. 
	\begin{itemize}
		\item Let $ \mathbf{v}=2\mathbf{e}_{15}+\mathbf{e}_{54}+5\mathbf{e}_{83}$ e $  \mathbf{w}=4\mathbf{e}_{15}+\mathbf{e}_{83}$. So $ \mathbf{v}-5\mathbf{w}=-18\mathbf{e}_{15}+\mathbf{e}_{54}$.
		\item  In the  algebraic vectors notation we have  $ \mathbf{v}_B=(\{15,54,83\},(2,1,5))$ e $  \mathbf{w}_B=(\{15,54\},(4,1))$. So $ \mathbf{v}_B-5\mathbf{w}_B=(\{15,83\},(-18,1))$.
		\item  In the usual ordered tuple notation we would have $ \mathbf{v}$ with $ 97 $ zeros, in addition to the others $ 98 $ zeros at $  \mathbf{w}$. It is not necessary to write this to see that in this case the previous forms are better.
	\end{itemize}
\end{ex}
If for vector spaces with large numbers of elements in the basis it has already become obvious that the notation by ordered tuples is not efficient, there is still no great advantage in introducing these "new" vectors. The next example presents a new scenario.
\begin{ex}
Now we will see a vector space over the field  $ \mathbb{R}$  having $\mathbb{C}$ as the index set for his basis.
\footnote{The use of $\mathbb{C}$ as an index set may suggest a problem with the well-ordering of the basis since it is common to hear that complex numbers cannot be ordered. This is a common misconception. The correct thing to say is that the complexes do not admit an “ordered ring” structure, that is, there is no way to define a well-ordering that preserves the ring structure. However, as guaranteed by the Well-Ordering Theorem, the set itself admits ordering, even if it is one that does not preserve the ring. A classic example is the so-called lexicographic order: order first by the real part; if the real parts are the same, sort by the imaginary part. This ordering will not be problematic because we use the indexes only as labels, not using any operation that can usually be defined in this set. In practice here, it doesn't matter whether you work with $\mathbb{C}$ or $ \mathbb{R}^2$ as an index set.}
We can operate, without trouble, with vectors such as $ \mathbf{v}_B=(\{\sqrt{15},\sqrt{15}+2i,4\},(2,1,5))$ and $  \mathbf{w}_B=(\{\sqrt{15},4\},(4,1))$ getting $ \mathbf{v}_B-5\mathbf{w}_B=(\{\sqrt{15},4\},(-18,1))$.
\end{ex}
It would still be equally easy to solve the previous example in the usual way, by explicitly writing the respective linear combinations for the vectors. Perhaps there was some difficulty in writing and reading the indexes, but that is still not enough to justify the use of this notation. Much less for this detailed construction of a structure that, in practice, is still the same. Worse still, giving the pompous name “Space of Algebraic Vectors” to a mere change of notation. In the next section, we will explain the real motivation for this work.
\section{Algebraic Vectors Heuristic Potential}\label{sec:Pot}
 A search for “notation” in mathematical history books, such as \cite{boyer}, will be enough to understand his importance. Conceptual developments generate feedback in notation and vice-versa. This work does not dare to be proposing any “revolutionary” notation changes, but the retrospect shows that the exploration of new notations is something with the potential to be relevant.
 
 We have just created this new notation only as foundations for another work. We will use such a structure to extends the matrix definition for vector spaces with an uncountable basis in \cite{MatAlg}. However, this “preliminary results” generate opportunities by itself. In the following subsections, we will explore some of these possible developments.
 \subsection{Data structure and Sparse Matrices}
The search for a path to define matrices with an uncountable basis was the central objective of this work. Despite this, the most remarkable consequence surprisingly is obtained in the finite case. Such as we will see in \cite{SparseMatrices}, this notation generates a new way to the sparse matrices problem. As can be seen in \cite{esparsas1,esparsas2,esparsas3,esparsas4}, this is a subject with many applications and an active research theme.
 
In Computer Science terminology, we are using registers and pointers to describe vectors. The unmistakable case is in the vector $\mathbf{0}= (\phi,\phi)=\{\phi\} $. Its similarity with the null pointer is not a coincidence being the base of many approaches developed at this work. Transforming common vectors in the algebraic ones is a turning back from relational to the hierarchical data structures.

The algebraic vector notation may be useful in the analysis of the algorithmic efficiency as a function of the number of "zeros" present. I put the word in quotation marks because what is called "zero" can be seen in a more general context, for example, that of text fields filled with blanks.

The knowledge of relational algebra was fundamental for the development of relational databases. We don't presume to have done something so important like that, mainly because our approach offers no news on the case of finite-dimensional vector spaces. Even so, this analogy may be useful in the context of the Hierarchical and Multidimensional Databases, Data Science, Big Data, etc.

Maybe already exists a structure equivalent to that made in this article. However, and perhaps there is something new here, this work surrounded the finite case in a much broader conceptual context. Fortunately, something like this usually inspires new approaches
 \subsection{Graph theory}

Let's stay a little longer in the finite-dimensional case, where our approach shows a remarkable resemblance to Graph Theory. Initial explorations revealed the existence of a bijection from finite-dimensional vector spaces to low extension Directed Acyclic Graphs (DAG). We are looking for convenient operations that could be included in this set of DAGs to obtain a group isomorphism. We will name these new structures as  Algebraic DAGs Space (ADAGS).

The existence of optimal algorithms for DAG can inspire new approaches for sparse matrices issues with lots of repeated values. We have hope in ADAGS as a font for new methods in data mining issues. A manuscript about this is in preparation \cite{ArvAlg}.

\subsection{About the Dimensions of Vector Spaces with Uncountable Basis}
The cardinality of the indexes sets defines the dimension of vector spaces thanks to the existence of bijections between the different basis for each vector space. There are interesting questions about enumerable basis cases that become fascinating in the uncountable ones.

We build vector spaces over fields, and its choice determines the dimension of them. For example,  $ \mathbb{C} $ is one-dimensional over itself, two-dimensional over $  \mathbb{R} $ and infinite over $  \mathbb{Q} $. Do occur something similar in the denumerable basis case?  And in the case of basis with even bigger cardinalities?

Interesting questions arise just taking $ \mathbb{R}$ to be one index set and, so, as a starting point to forming some vector space. Taking into account the field properties, is it possible to define a kind of "dimensional derivative"? Assuming this possibility, can we write differential equations that relate the efficiency of some algorithms to the dimension of space? If this approach is possible, the study of algorithmic efficiency according to the vector space size becomes a problem of calculating maximal and minimums values of some functions.

We suppose that answers to these questions already exist, but we hope that from the results obtained in \cite{MudBaseAlg}, at least, they can gain a new approach.
\section{Conclusion}\label{sec:Conc}
We developed a notation that serves as an insight to create original approaches and as a source of new questions. The proof of this is that we made it searching for a redefinition of the matrix concept, but ended finding completely unexpected possibilities. New questions have arisen. Some of which are of purely theoretical interest but also others with considerable applicability in the real world.

In the previous section, we mentioned only a small part of what we already perceive. Realistically, most of these possibilities are unlikely to bring anything new. Because there are so many possibilities, we believe in the chance of something good to emerges from our job. In fact, among so many possibilities, we expect some of them to be productive.

We are working hard with some of them and, hopefully, we will soon get good results. We are already counting on the support of some collaborators and making progress. Curiously, we are obtaining our best results in the context of sparse matrices, even though being a theme so apart of our initial objectives. We will soon display our results.

The heuristic potential of examining old problems with a new look is immeasurable. We hope that this work turns out to be an example of this.
\section*{Acknowledgments}
The author is in debt with many people and organizations, and a fairer list will appear only at the Portuguese version of this work \cite{VetAlg}. Despite this, I cannot fail to thank Ana Maria Machado Matias. Without your enormous efforts, I would never have achieved a university education.
 ----------------------------------------------------------
\bibliographystyle{amsplain}
\bibliography{MudancaDeBase}

\end{document}